\newtheorem{theorem}[equation]{Theorem}
\newtheorem{lemma}[equation]{Lemma}
\newtheorem{proposition}[equation]{Proposition}
\newtheorem{definition}[equation]{Definition}
\newtheorem{corollary}[equation]{Corollary}
\theoremstyle{remark}
\newtheorem{remark}[equation]{Remark}
\newtheorem{example}[equation]{Example}
\numberwithin{equation}{section}
\numberwithin{equation}{section}
\newcounter{subnumber}
\numberwithin{subnumber}{equation}
\def\Subeq#1#2{\refstepcounter{subnumber}
                 \ifx#1*\relax\else\label{#1}\fi
                 $$#2\leqno{(\thesubnumber})$$}
\DeclareMathOperator{\Ind}{Ind}
\DeclareMathOperator{\Res}{Res}
\DeclareMathOperator{\Irr}{Irr}
\DeclareMathOperator{\Poin}{Poin}
\DeclareMathOperator{\Sym}{Sym}
\DeclareMathOperator{\proj}{proj}
\newcommand{\C}{{\mathbb C}}
\newcommand{\F}{{\mathbb F}}
\newcommand{\N}{{\mathbb N}}
\newcommand{\bbZ}{{\mathbb Z}}
\newcommand{\vp}{{\varphi}}
\newcommand{\vpi}{{\varpi}}
\newcommand{\ol}{\overline}
\newcommand\lr{{\longrightarrow\;}}
\newcommand{\CA}{{\mathcal A}}
\newcommand{\CC}{{\mathcal C}}
\newcommand{\CF}{{\mathcal F}}
\newcommand{\CH}{{\mathcal H}}
\newcommand{\CL}{{\mathcal L}}
\newcommand{\CN}{{\mathcal N}}
\newcommand{\GL}{{\text{GL}}}
\newcommand{\SL}{{\text{SL}}}
\newcommand{\Sp}{{\text{Sp}}}
\newcommand{\inv}{^{-1}}
\newcommand{\lexp}[2]{\kern\scriptspace\vphantom{#2}^{#1}\kern-\scriptspace#2}
\newcommand{\wt}{\widetilde}
\newcommand{\half}{\frac{1}{2}}
\newcommand{\texp}{{^{\text{th}}}}
\newcommand{\bff}{\mathbf f}
\newcommand{\bhh}{\mathbf h}
\newcommand\be{\begin{equation}}
\newcommand\ee{\end{equation}}
\newcommand\ot{\otimes}
\begin{document}
\author{G.I.~Lehrer}
\address{
School of Mathematics and Statistics\\
University of Sydney\\
N.S.W. 2006, Australia\\
Fax: +61 2 9351 4534}
\email{gustav.lehrer@sydney.edu.au}

\title[A factorisation theorem]{A factorisation theorem for the coinvariant algebra of a unitary reflection group}
\begin{abstract} We prove the following theorem. 
Let $G$ be a finite group generated by unitary reflections
in a complex Hermitian space $V=\C^\ell$ and let $G'$ be any reflection subgroup
of $G$. Let $\CH=\CH(G)$ be the space of $G$-harmonic polynomials
on $V$. There is a degree preserving isomorphism
$\mu:\CH(G')\ot\CH(G)^{G'}\overset{\sim}{\lr}\CH(G)$ 
of graded $\CN$-modules, where $\CN:=N_{\GL(V)}(G)\cap N_{\GL(V)}(G')$
and $\CH(G)^{G'}$ is the space of $G'$-fixed points of $\CH(G)$.
This generalises a result of Douglass and Dyer for parabolic subgroups of real reflection groups.
An application is given to counting rational conjugates of reductive groups over $\F_q$.
\end{abstract}
\keywords {Unitary reflection group; coinvariant algebra; reductive group}
\subjclass[2010]{Primary 20F55; Secondary 14G05, 20G40, 51F15}
\maketitle
\section{Background and notation}

Much of the background material in this section may be found in \cite[Ch. 9]{LT}.
Let $G$ be a finite group generated by (pseudo)reflections in a complex
vector space $V$ of dimension $\ell>0$. 
It is well known that if
$S$ denotes the coordinate ring of $V$ (identified with the 
symmetric algebra $S(V^*)$ on the dual $V^*$) then $G$ acts contragrediently on $V^*$,
and hence on $S$, and the ring $S^G$ of
polynomial invariants of $G$ on $S$ is free; if  
$F_1,F_2,\dots,F_\ell$ is a set of homogeneous free generators
of $S^G$, then the degrees $d_i=\deg F_i$ ($i=1,\dots,\ell$) are
determined by $G$, and are called the {\sl invariant degrees} of $G$.

If $\CF$ is the ideal of $S$ generated by the elements of $S^G$ which vanish at $0\in V$,
then $S/\CF:=S_G$ realises the regular representation of $G$.
The space $S_G$ is called the {\sl coinvariant algebra} of $(G,V)$.
Since the ideal $\CF$ is graded, $S_G$ is clearly graded, and for each $i$, the graded component
$(S_G)_i$ of degree $i$ is a $G$ module. By a classical result of
Chevalley \cite[Cor 3.31, p. 52]{LT}, taking products of polynomials
defines an isomorphism of $\C G$-modules
\be\label{eq:tensor}
S\overset\sim\lr S^G\otimes_\C S_G.
\ee

For any $\N$-graded $\C$-vector space $W=W_0\oplus W_1\oplus W_2\oplus\dots$ where the $W_i$ are finite dimensional,
we write 
\be\label{eq:poin}
\Poin_W(t)=\sum_{i=0}^\infty \dim_\C(W_i)t^i\in\C[[t]]
\ee
for its Poincar\'e series. If $\dim(W)<\infty$ then $\Poin_W(t)$ is a polynomial. For properties of these series see \cite[Ch. 4]{LT}.

\subsection{The space of harmonic polynomials}\label{ss:harm} If $\CC$ is any graded $G$-stable complement of $\CF$ in $S$,
then  evidently $\CC\cong S_G$ as graded $G$-module. The space of $G$-harmonic polynomials is a canonical such complement,
which we now describe. Let $\CA:=\CA_G$ be the set of reflecting hyperplanes of $G$ and for each hyperplane $H\in\CA$,
let $L_H\in S$ be a linear form such that $H=\ker(L_H)$ and let $e_H:=|G_H|$, where $G_H$ is the (cyclic) group of reflections
in $H$. Define
\be\label{eq:Pi}
\Pi=\Pi_G:=\prod_{H\in\CA_G}L_H^{e_H-1}\in S.
\ee 

Then \cite[Lemma 9.9]{LT} asserts that $\Pi$ is skew, that is, for $g\in G$, $g\Pi=\det_V(g)\Pi$, and further if $A\in S$ is any
skew polynomial, then 
\be\label{eq:skew}
A=B\Pi, \text{ where }B\in S(V^*)^G.
\ee

Now let $S(V)$ be the symmetric algebra on $V$. For $v\in V$, we have the derivation $D_v$ of $S$ defined by 
\[
D_v(A)(x)=\lim_{t\to 0}\frac{A(x+tv)-A(x)}{t}.
\]
It is explained in \cite[\S9.5]{LT} how the map $v\mapsto D_v$ extends to an algebra homomorphism from $S(V)$ to the algebra of differential
operators on $S$. Thus for $a\in S(V)$ we have $D_a:S\to S$; for homogeneous $a$, this operator has degree $-\deg(a)$. The properties of 
this algebra of operators are summarised in the next statement, whose proof may be found in \cite[Ch. 9]{LT}.

\begin{theorem}\label{thm:pair}
Let $G$, $V$, $S=S(V^*)$, $S(V)$ etc be as above. 
\begin{enumerate}
\item Let $a\in S(V)$, $P\in S(V^*)$ and $g\in\GL(V)$. Then $g(D_a(P))=D_{ga}(gP)$.
\item Define a bilinear pairing $[-,-]:S(V)\times S(V^*)\lr \C$ by $[a,P]:= D_a(P)(0)$ for $a\in S(V)$, $P\in S$. 
This pairing is non-degenerate in both variables.
\item The pairing in (ii) is respected by the action of $\GL(V)$; i.e., for $a\in S(V)$, $P\in S(V^*)$ and $g\in\GL(V)$,
we have $[ga, gP]=[a,P]$.
\end{enumerate}
\end{theorem}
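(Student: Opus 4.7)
The plan is to prove (i) directly from the definitions, deduce (iii) as a formal consequence, and establish (ii) by producing an explicit diagonal form of the pairing in monomial bases.

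For (i), I would first verify the linear case $a=v\in V$ by a short computation. Since $g$ acts on $V^*$ contragrediently, $(gP)(x)=P(g^{-1}x)$, and substitution gives
\[
D_{gv}(gP)(x)=\lim_{t\to 0}\frac{P(g^{-1}x+tv)-P(g^{-1}x)}{t}=D_v(P)(g^{-1}x)=g(D_v(P))(x).
\]
For general $a$, it suffices by linearity to treat monomials $a=v_1\cdots v_k$, and the identity $D_{ab}=D_aD_b$ together with the fact that $g$ is an algebra automorphism of $S(V)$ permits an induction on $k$.

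Part (iii) then follows at once: $g\in\GL(V)$ fixes $0\in V$, so $(gQ)(0)=Q(0)$ for every $Q\in S$, whence
\[
[ga,gP]=D_{ga}(gP)(0)=g(D_a(P))(0)=D_a(P)(0)=[a,P].
\]

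For (ii), choose a basis $e_1,\dots,e_\ell$ of $V$ with dual basis $x_1,\dots,x_\ell\in V^*$, so that $D_{e_i}(x_j)=\delta_{ij}$. Since the derivations $D_{e_i}$ pairwise commute, a direct calculation gives, for multi-indices $\alpha,\beta\in\N^\ell$,
\[
D_{e^\alpha}(x^\beta)=\prod_{i=1}^\ell\frac{\beta_i!}{(\beta_i-\alpha_i)!}\,x^{\beta-\alpha}
\]
(interpreted as $0$ if some $\alpha_i>\beta_i$), and consequently $[e^\alpha,x^\beta]=\alpha!\,\delta_{\alpha\beta}$. The matrix of the pairing in these monomial bases is thus diagonal with nonzero entries, proving non-degeneracy in each variable.

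The only point that requires genuine care, implicit in all three parts, is the well-definedness of the algebra homomorphism from $S(V)$ to the algebra of differential operators on $S$ extending $v\mapsto D_v$; this rests on the commutativity of the $D_v$ for $v\in V$, i.e.\ the symmetry of mixed partial derivatives. Once this foundational fact from \cite[\S 9.5]{LT} is granted, the arguments above reduce to routine verifications.
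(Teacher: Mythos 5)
Your proof is correct and complete; the paper itself does not prove this theorem but defers to \cite[Ch.~9]{LT}, and your argument (direct verification of equivariance on linear generators, induction via $D_{ab}=D_aD_b$, and the diagonal form $[e^\alpha,x^\beta]=\alpha!\,\delta_{\alpha\beta}$ in monomial bases) is exactly the standard proof given there. Nothing is missing: you correctly isolate the one nontrivial foundational point (well-definedness of $a\mapsto D_a$ via commutativity of the $D_v$), and the evaluation at $0$ in part (ii) does kill all off-diagonal terms since $x^{\beta-\alpha}$ vanishes at the origin unless $\alpha=\beta$.
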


Theorem \ref{thm:pair} shows that $[-,-]$ puts the spaces $S=S(V^*)$ and $S(V)$ in $\GL(V)$-equivariant duality.
Now let $S(V)^G$ be the algebra of invariants of $G$ on $S(V)$. Denote by $\bff\subset S(V)$ the analogue of $\CF$ for $S(V)$.
That is, $\bff$ is the ideal of $S(V)$ generated by the elements of $S(V)^G$ with no constant term.

\begin{definition}\label{def:harm}
Define the space $\CH=\CH(G)$ of $G$-harmonic polynomials in $S$ by $\CH(G):=\bff^\perp$. That is, $P\in\CH$ 
if and only if $[a,P]=0$ for all $a\in\bff$.
\end{definition}

The main facts concerning the space $\CH$ which we shall require are summarised in the next statement.

\begin{theorem}\label{thm:harm}
Maintain the above notation.
\begin{enumerate}
\item We have $P\in\CH$ if and only if $D_a(P)=0$ for all $a\in S(V)^G$. That is, $\CH$ is the space of functions which are 
annihilated by the invariant differential operators.
\item The space $\CH$ coincides with $\{D_a(\Pi)\mid a\in S(V)\}$, where $\Pi$ is as defined in \eqref{eq:Pi}. 
\item The space $\CH$ is a $G$-stable complement of $\CF$ in $S(V^*)$. In particular, we have the $N_{\GL(V)}(G)$
equivariant decomposition
\be\label{eq:sum}
S(V^*)=\CH\oplus \CF.
\ee
\end{enumerate}
\end{theorem}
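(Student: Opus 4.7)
For (i), observe that $\bff$ is the ideal of $S(V)$ generated by the positive-degree $G$-invariants, so every element of $\bff$ is a sum of products $ba$ with $b\in S(V)$ and $a\in S(V)^G$ of positive degree. Since $v\mapsto D_v$ extends to an algebra homomorphism $S(V)\to\mathrm{End}(S)$, one has $D_{ba}=D_bD_a$, and therefore
\[
[ba,P]=D_b(D_a(P))(0)=[b,D_a(P)].
\]
Thus $P\in\CH=\bff^\perp$ iff $[b,D_a(P)]=0$ for all $b\in S(V)$ and every positive-degree $a\in S(V)^G$, which by non-degeneracy of the pairing (Theorem \ref{thm:pair}(ii)) is equivalent to $D_a(P)=0$ for every such $a$. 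This yields (i).

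For (ii), set $\CH':=\{D_a(\Pi):a\in S(V)\}$. First, $\Pi\in\CH$: for positive-degree $a\in S(V)^G$ and $g\in G$, Theorem \ref{thm:pair}(i) combined with $g\Pi=\det_V(g)\Pi$ gives $g\cdot D_a(\Pi)=D_{ga}(g\Pi)=\det_V(g)\,D_a(\Pi)$, so $D_a(\Pi)$ is skew; but $\deg D_a(\Pi)<\deg\Pi$, and by \eqref{eq:skew} every non-zero skew polynomial has degree at least $\deg\Pi$, forcing $D_a(\Pi)=0$. The operators $\{D_a:a\in S(V)\}$ form a commutative family (image of the algebra homomorphism $S(V)\to\mathrm{End}(S)$), so for any $c\in S(V)$ and positive-degree invariant $a$ one has $D_a(D_c(\Pi))=D_c(D_a(\Pi))=0$, whence $D_c(\Pi)\in\CH$ by (i), proving $\CH'\subseteq\CH$. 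For the reverse containment I argue by dimension: the map $c\mapsto D_c(\Pi)$ factors through a surjection $S(V)/\bff\twoheadrightarrow\CH'$, and showing its kernel is precisely $\bff$ yields $\dim\CH'=\dim S(V)/\bff=\dim\CH$ (via the Poincar\'e series computation below), hence $\CH'=\CH$.

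For (iii), the identity $\Poin_\CH(t)+\Poin_\CF(t)=\Poin_S(t)$ follows from non-degeneracy of $[-,-]$ on each graded piece (giving $\dim\CH_n=\dim S_n-\dim\bff_n$) together with the analogue of Chevalley's theorem for $S(V)$ in place of $S=S(V^*)$ (giving $\dim\bff_n=\dim\CF_n$). Combined with the separation statement $\CH\cap\CF=0$, equivalent to the injectivity claim in (ii), this produces the direct sum decomposition $S=\CH\oplus\CF$. The $N_{\GL(V)}(G)$-equivariance is automatic: since $G$ is normal in $N_{\GL(V)}(G)$, the ideal $\bff$ is $N_{\GL(V)}(G)$-stable, so $\CH=\bff^\perp$ is stable by Theorem \ref{thm:pair}(iii); $\CF$ is manifestly stable as well.

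The main obstacle tying (ii) and (iii) together is the separation step: proving $\mathrm{Ann}_{S(V)}(\Pi)=\bff$, or equivalently that $\CH\cap\CF=0$. The Poincar\'e series bookkeeping falls out cleanly from the pairing and Chevalley's theorem, but the required injectivity needs additional structural input, typically an analysis of $S$ as a free $S^G$-module in the spirit of \cite[Ch. 9]{LT}.
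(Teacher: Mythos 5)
Your treatment of the routine parts is correct and matches the standard development that the paper is implicitly invoking (the paper gives no proof of this theorem, deferring entirely to \cite[Ch.~9]{LT}): the identity $[ba,P]=[b,D_a(P)]$ plus non-degeneracy gives (i); the skewness-plus-degree argument shows $D_a(\Pi)=0$ for positive-degree invariant $a$, hence $\{D_c(\Pi)\mid c\in S(V)\}\subseteq\CH$; and the Poincar\'e bookkeeping $\dim\CH_n=\dim S_n-\dim\bff_n=\dim S_n-\dim\CF_n$ together with the normality of $G$ in $N_{\GL(V)}(G)$ correctly reduces (iii) to the single assertion $\CH\cap\CF=0$.

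However, there is a genuine gap, and you name it yourself: you never prove the separation statement, in any of its equivalent forms ($\mathrm{Ann}_{S(V)}(\Pi)=\bff$, or $\CH\subseteq\{D_c(\Pi)\}$, or $\CH\cap\CF=0$). This is not a loose end that ``falls out'' of the pairing formalism --- it is the entire content of the theorem, essentially equivalent to Chevalley's theorem that multiplication $S^G\otimes\CH\to S$ is an isomorphism (equation \eqref{eq:tensor} of the paper). Every known proof requires a structural input beyond duality and dimension counts: typically Steinberg's key lemma on relations $\sum_i u_i p_i=0$ among homogeneous invariants $u_i$ (forcing $p_1\in\CF$ when $u_1\notin(u_2,\dots,u_k)S^G$), or equivalently the freeness of $S$ as an $S^G$-module, applied to show that $\Pi$ generates the $\det$-isotypic component freely and hence that its annihilator in $S(V)$ is no larger than $\bff$. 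Note also that your proposed reduction of (iii) to (ii) (``$\CH\cap\CF=0$ is equivalent to the injectivity claim'') is itself not justified: knowing $\dim\{D_c(\Pi)\}=|G|$ does not by itself yield $\CH\cap\CF=0$ without a further argument. As written, the proposal establishes only the easy inclusions and leaves the theorem's crux to the same reference the paper cites. A smaller point: in (i), the condition must be read as $D_a(P)=0$ for $a\in S(V)^G$ \emph{with no constant term} (as you do implicitly), since $D_1=\Id$.
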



\section{Reflection subgroups--the main theorem.}

A reflection subgroup of the reflection group $G$ in $V$ is a subgroup of $G$ which is generated by some of the reflections
in $G$. For background concerning such groups see \cite{DyL11, T}. They include the parabolic subgroups of $G$
\cite{St, Le04}, but many other subgroups as well.  Let $G'$ be such a subgroup and write $\CH'=\CH(G')$
for its space of harmonic polynomials $\CF'=\CF(G')$, and so on. For any $G$-module $M$, write $M^{G'}$ for its
subspace of $G'$-fixed elements.
Our objective is to prove the following theorem.

\begin{theorem}\label{thm:main}
Let $G$ be a finite group generated by unitary reflections
in a complex Hermitian space $V=\C^\ell$ and let $G'$ be any reflection subgroup
of $G$. Let $\CH=\CH(G)$ be the space of $G$-harmonic polynomials
on $V$. There is a degree preserving isomorphism
\[
\xi:\CH'\ot\CH^{G'}\overset{\sim}{\lr}\CH
\] 
of graded $\CN$-modules, where $\CN:=N_{\GL(V)}(G)\cap N_{\GL(V)}(G')$.
\end{theorem}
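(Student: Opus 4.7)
The plan is to take $\xi$ to be the composite of polynomial multiplication in $S$ with the canonical projection $\pi_\CH : S \to \CH$ coming from the $\CN$-stable decomposition $S = \CH \oplus \CF$ of Theorem \ref{thm:harm}(iii); that is,
\be
\xi(h' \otimes h) := \pi_\CH(h' \cdot h).
\ee
I would prove that $\xi$ is an isomorphism by applying the Chevalley factorisation \eqref{eq:tensor} twice — once for $G$ and once for $G'$ — and then reducing modulo $\CF$.

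The first step is to establish the intermediate factorisation $S^{G'} \cong \CH^{G'} \otimes_\C S^G$ under multiplication. The isomorphism $S \cong \CH \otimes_\C S^G$ of \eqref{eq:tensor} is $G$-equivariant, hence in particular $G'$-equivariant. Because $G' \leq G$ fixes $S^G$ pointwise, the $G'$-action on $\CH \otimes_\C S^G$ is concentrated on the first factor, so taking $G'$-fixed points gives the claimed factorisation. Applying \eqref{eq:tensor} to the reflection group $G'$ itself then gives $S \cong \CH' \otimes_\C S^{G'}$, and substituting the intermediate factorisation produces a graded linear isomorphism
\be
m : \CH' \otimes_\C \CH^{G'} \otimes_\C S^G \overset{\sim}{\lr} S, \qquad h' \otimes h \otimes f \longmapsto h'\,h\,f.
\ee
Since $\CF = S^G_+\cdot S$ and $m$ is multiplication (with $S$ commutative), the preimage under $m$ of $\CF$ is exactly $\CH' \otimes_\C \CH^{G'} \otimes_\C S^G_+$. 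Reducing both sides modulo $\CF$ then yields a graded isomorphism $\CH' \otimes_\C \CH^{G'} \overset{\sim}{\lr} S/\CF$; composing with the inverse of the isomorphism $\CH \overset{\sim}{\lr} S/\CF$ induced by $\pi_\CH$ gives precisely $\xi$.

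For $\CN$-equivariance: every $n \in \CN$ normalises $G$, hence permutes $S^G$, preserves $\CF$, and so $\pi_\CH$ is a $\CN$-map. It also normalises $G'$, so $\CH'$ is $\CN$-stable (Theorem \ref{thm:harm}(iii) applied to $G'$), and the subspace $\CH^{G'} \subseteq \CH$ is $\CN$-stable because conjugation by $n$ permutes $G'$ and so preserves $G'$-fixed vectors. Polynomial multiplication is $\GL(V)$-equivariant. Together these show $\xi$ is a $\CN$-module homomorphism; degree-preservation is immediate since multiplication is degree-additive and the decomposition $S=\CH\oplus\CF$ is graded.

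The main technical hinge is the intermediate factorisation $S^{G'}\cong\CH^{G'}\otimes_\C S^G$; once it is in hand everything else is bookkeeping modulo $\CF$. This approach extends Douglass–Dyer from the parabolic case to that of an arbitrary reflection subgroup $G'$ precisely because the one non-trivial input — that Chevalley's factorisation \eqref{eq:tensor} is available for $G'$ as well — holds for any reflection subgroup of $G$ acting on the full space $V$.
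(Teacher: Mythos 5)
Your proof is correct, and its overall strategy coincides with the paper's: both hinge on Chevalley's factorisation \eqref{eq:tensor} applied to the subgroup $G'$ acting on all of $V$, combined with the $\CN$-stable splitting $S=\CH\oplus\CF$, and both ultimately define $\xi$ as multiplication followed by projection onto $\CH$ along $\CF$. The one genuine divergence is in how bijectivity is established. The paper writes $S^{G'}=\CH^{G'}\oplus\CF^{G'}$, observes that $\CH'\ot\CF^{G'}$ multiplies into the ideal $\CF$, and so obtains a \emph{surjection} $\CH'\ot\CH^{G'}\to S/\CF$, which it then upgrades to an isomorphism by a dimension count ($\dim\CH^{G'}=|G|/|G'|$ because $\CH$ affords the regular representation of $G$, so both sides have dimension $|G|$). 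You instead build the three-fold multiplication isomorphism $\CH'\ot\CH^{G'}\ot S^G\overset{\sim}{\lr}S$ and identify the preimage of $\CF$ exactly as $\CH'\ot\CH^{G'}\ot S^G_+$; the clean way to say why this identification holds is that $m$ is an isomorphism of $S^G$-modules, so it carries the submodule $S^G_+\cdot(\CH'\ot\CH^{G'}\ot S^G)$ bijectively onto $S^G_+\cdot S=\CF$ --- your parenthetical ``$m$ is multiplication and $S$ is commutative'' is the right reason but deserves that one extra sentence. Your route buys two things: it avoids the dimension count and the appeal to the regular representation entirely, and it makes explicit the freeness of $S^{G'}$ over $S^G$ (your intermediate factorisation $S^{G'}\cong\CH^{G'}\ot S^G$), which the paper only records separately as a remark attributed to Dyer. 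The paper's route is marginally shorter because it never needs the $S^G$-module structure. Your handling of $\CN$-equivariance and grading matches the paper's (which simply declares these ``evident'').
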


\begin{proof}
We begin with the observation that using \eqref{eq:tensor} applied to the reflection group $G'$ acting on $V$, we have 
the linear isomorphism 
\be\label{eq:1}
\CH'\ot S^{G'}\lr S,
\ee
given by multiplying the polynomials in the tensor factors.

 Next, applying \eqref{eq:sum}, we have the $G$ equivariant decomposition
 \be\label{eq:2}
S=\CH\oplus \CF.
\ee

Since both summands on the right side of \eqref{eq:2} are stable under $G$ and hence {\it a fortiori} under $G'$, 
it follows that we have a graded linear isomorphism
\be\label{eq:3}
S^{G'}\cong \CH^{G'}\oplus \CF^{G'}.
\ee

Substituting \eqref{eq:3} into \eqref{eq:1} we obtain a linear isomorphism
\be\label{eq:4}
\CH'\ot\left( \CH^{G'}\oplus \CF^{G'}   \right)\simeq (\CH'\ot\CH^{G'})\oplus  (\CH'\ot\CF^{G'})\overset{\sim}{\lr} S.
\ee

Now evidently the summand $\CH'\ot\CF^{G'}$ is mapped in the multiplication isomorphism \eqref{eq:4} to a subspace of $\CF$,
since $\CF$ is an ideal of $S$. It follows by restricting the map in \eqref{eq:4} to the summand $\CH'\ot\CH^{G'}$
 that we have a surjective degree preserving linear map
 \be\label{eq:5}
 \xi:\CH'\ot\CH^{G'}\lr \frac{S}{\CF}\simeq S_G.
  \ee 
  
  But since $\CH$ realises the regular representation of $G$, we have $\dim(\CH^{G'})=|G|/|G'|$ while evidently $\dim(\CH')=|G'|$,
  so that the dimension of the left side of \eqref{eq:5} is $|G|=\dim(S_G)(=\dim(\CH))$. It follows that $\xi$ is a graded isomorphism, and since 
  $S/\CF\cong S_G\cong\CH$ as graded $G$-modules, it follows that $\xi$ is a graded linear isomorphism.

It remains only to observe that all homomorphisms above evidently respect the action of $\CN$, and the proof is complete.
\end{proof}

\begin{remark}
One of the key ingredients of the proof is \eqref{eq:tensor}, which asserts that $S$ is free as module over $S^G$.
Notice that taking $G'$-invariants in \eqref{eq:tensor} yields further that $S^{G'}$ is free over $S^G$. This was first
noticed by Dyer \cite{DyP}.
\end{remark}

\begin{remark}
The special case of Theorem \ref{thm:main} where $G$ is a finite Coxeter group and $G'$ is a parabolic subgroup
was treated in \cite[Thm. 2.1]{D94}, where the author acknowledges input from M. Dyer, who pointed out that the result
is connected to the discussion of the cohomology of the flag variety in \cite{BGG}. We believe that our proof 
is significantly simpler and our result is more general than that in {\it op. cit.}.
\end{remark}

\section{Complements}

\subsection{Poincar\'e polynomials} Since Poincar\'e polynomials are multiplicative on tensor products, the following statement is clear.
In the statement we use the convention that for any unitary reflection group $G$, $\Poin_G(t):=\Poin_{S/\CF}(t)=\Poin_{\CH}(t)$
(cf. \eqref{eq:poin}).
\begin{corollary}\label{cor:poin}
With notation as in Theorem \ref{thm:main}, we have
\[
\Poin_{G}(t)=\Poin_{G'}(t)\Poin_{\CH^{G'}}(t).
\]
\end{corollary}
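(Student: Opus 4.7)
The plan is to derive Corollary \ref{cor:poin} as an immediate consequence of Theorem \ref{thm:main}. The key fact to invoke is that for any two $\N$-graded $\C$-vector spaces $U = \bigoplus_i U_i$ and $W = \bigoplus_j W_j$ with finite-dimensional graded pieces, the tensor product $U \otimes_\C W$ is graded by total degree, and its Poincar\'e series satisfies
\[
\Poin_{U \otimes W}(t) = \Poin_U(t)\,\Poin_W(t),
\]
a standard fact recorded in \cite[Ch.~4]{LT}. So the strategy is simply: read off the isomorphism, take Poincar\'e series, and match conventions.

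First, I would recall from Theorem \ref{thm:main} that we have a degree-preserving linear isomorphism $\xi : \CH' \otimes \CH^{G'} \overset{\sim}{\lr} \CH$. Since $\xi$ preserves degree, taking Poincar\'e series of both sides yields
\[
\Poin_{\CH}(t) = \Poin_{\CH' \otimes \CH^{G'}}(t).
\]
Next, I would apply the multiplicativity recalled above to the right-hand side, obtaining
\[
\Poin_{\CH}(t) = \Poin_{\CH'}(t)\,\Poin_{\CH^{G'}}(t).
\]

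Finally, I would translate this into the notation of the corollary by invoking the stated convention $\Poin_G(t) := \Poin_{\CH(G)}(t) = \Poin_{\CH}(t)$ and correspondingly $\Poin_{G'}(t) = \Poin_{\CH'}(t)$ (applied to the reflection subgroup $G'$ acting on $V$, which is itself a unitary reflection group). This gives the desired identity
\[
\Poin_G(t) = \Poin_{G'}(t)\,\Poin_{\CH^{G'}}(t).
\]
There is no real obstacle here; the whole content is packaged in Theorem \ref{thm:main}, and the corollary is essentially just an observation that degree-preserving isomorphisms of graded vector spaces can be detected by their Poincar\'e series.
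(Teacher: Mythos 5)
Your proposal is correct and follows exactly the paper's argument: the corollary is obtained by applying the multiplicativity of Poincar\'e series on tensor products to the degree-preserving isomorphism of Theorem \ref{thm:main}, together with the stated convention $\Poin_G(t)=\Poin_{\CH}(t)$.
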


We can be a little more explicit about the second factor in the right side above.
Recall that for any finite dimensional $\C G$-module $M$, the fake degree
\[
f_M^{(G)}(t)=f_M(t):=\sum_{i=0}^N(\CH_i,M)_G\;t^i,
\]
where $(\CH_i,M)_G$ is the intertwining number of $M$ with the degree $i$ graded component $\CH_i$ of $\CH=\CH(G)$.

Denote by $\Irr(G)$ the set of equivalence classes of irreducible $\C G$-modules. For $M\in\Irr(G)$
define integers $m(M)\geq 0$ by 
\[
Ind_{G'}^{G}(1)=\sum_{M\in\Irr(G)}m(M)M.
\]

\begin{proposition}\label{prop:poin}
We have 
\be
\Poin_{\CH^{G'}}(t)=\sum_{M\in\Irr(G)}m(M)f_M(t),
\ee
where $f_M(t)$ is the fake degree of $M$ defined above.
\end{proposition}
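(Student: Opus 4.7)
The plan is to compute $\Poin_{\CH^{G'}}(t)$ degree by degree using Frobenius reciprocity, then reassemble the pieces into the fake degrees.

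First I would write $\CH = \bigoplus_{i\geq 0}\CH_i$ as a graded $G$-module, so that $\CH^{G'} = \bigoplus_{i\geq 0}\CH_i^{G'}$ as a graded vector space, and hence
\[
\Poin_{\CH^{G'}}(t) = \sum_{i\geq 0}\dim_\C(\CH_i^{G'})\,t^i.
\]
The key observation is that $\dim_\C(\CH_i^{G'}) = (\CH_i, 1_{G'})_{G'}$ is the multiplicity of the trivial character of $G'$ in the restriction $\Res^G_{G'}\CH_i$.

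Next I would apply Frobenius reciprocity to rewrite this as
\[
\dim_\C(\CH_i^{G'}) = (\CH_i, 1_{G'})_{G'} = (\CH_i, \Ind_{G'}^G 1)_G.
\]
Substituting the decomposition $\Ind_{G'}^G(1) = \sum_{M\in\Irr(G)} m(M)\,M$ and using bilinearity of $(-,-)_G$ yields
\[
\dim_\C(\CH_i^{G'}) = \sum_{M\in\Irr(G)} m(M)\,(\CH_i, M)_G.
\]

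Finally I would multiply by $t^i$, sum over $i\geq 0$, and interchange the two (finite, in each degree) summations to obtain
\[
\Poin_{\CH^{G'}}(t) = \sum_{M\in\Irr(G)} m(M)\sum_{i\geq 0}(\CH_i, M)_G\,t^i = \sum_{M\in\Irr(G)} m(M)\,f_M(t),
\]
which is the claimed identity. There is essentially no obstacle here: the proof is purely a routine application of Frobenius reciprocity, and the only care needed is to note that the $G$-action on $\CH$ preserves the grading so that taking $G'$-invariants commutes with the direct sum decomposition by degree.
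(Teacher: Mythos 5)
Your proof is correct and follows essentially the same route as the paper: both compute $\dim_\C(\CH_i^{G'})$ via Frobenius reciprocity and the decomposition $\Ind_{G'}^G(1)=\sum_M m(M)M$, then sum over $i$. The only cosmetic difference is that you apply reciprocity directly to $\CH_i$ while the paper applies it to each irreducible $M$ first; the computation is the same.
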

\begin{proof}
For any $M\in\Irr(G)$, $\dim(M^{G'})=m(M)$. For $\dim(M^{G'})=(M,1)_{G'}=(M,\Ind_{G'}^G(1))_G$
by Frobenius reciprocity. It follows that $\dim(\CH_i)^{G'}=\sum_{M\in\Irr(G)}m(M)(\CH_i,M)_G$.

Multiplying this last relation by $t^i$ and summing over $i$ and $M$ gives the stated formula.
\end{proof}

We give two examples where $G'$ is a non-parabolic reflection subgroup.

\begin{example}\label{ex:cyc}
Let $G=\mu_e$ be the group of $e\texp$ roots of unity acting on $V=\C$ in the obvious way. Let $G'=\mu_d\subseteq G$
be the subgroup of $d\texp$ roots of unity, where $d$ divides $e$. Then $\Pi=X^{e-1}$, $\Pi'=X^{d-1}$, 
$\CH=\langle 1,X,X^2,\dots,X^{e-1} \rangle$, $\CH'=\langle 1,X,X^2,\dots,X^{d-1} \rangle$
and $\CH^{G'}=\langle 1,X^d,X^{2d},\dots,X^{e-d} \rangle$, so that $\xi:\CH'\ot\CH(G)^{G'}\lr\CH$ is given here 
by simple multiplication of the basis elements.
\end{example}

\begin{example}\label{ex:b2}
Let $G$ be the Weyl group of type $B_2$. Let $V$ have orthonormal basis $\{x,y\}$, with $G'=\langle r_x, r_y\rangle$, 
where $r_x$ is the reflection in $x^\perp$ and similarly for $r_y$. Let $V^*$ have dual basis $X,Y$, so that in the above notation $\vp(x)=X$
and $\vp(y)=Y$.

Further, we have
\[
\Pi=XY(X^2-Y^2),\;\; \Pi'=XY,\text{ and}
\]

Moreover $S(V)^{G'}=\langle x^2,y^2\rangle$ and $S(V)^G=\langle  x^2+y^2, x^2y^2\rangle$.

Using this data, it is straightforward to compute that $\CH$ has a basis
\[
1,X,Y,XY, {X^2-Y^2}, {3X^2Y-Y^3},{X^3-3XY^2}, {X^3Y-XY^3},
\]
and that $\CH'$ has basis $1,X,Y,XY$. Evidently $\CH^{G'}$ has basis $1,X^2-Y^2$

Now consider $\xi:\CH'\ot\CH^{G'}\lr \CH$, given by multiplication of polynomials, followed by projection to $\CH$. 
Clearly $1\ot 1,X\ot 1,Y\ot 1,XY\ot 1\mapsto 1,X,Y,XY$. Now $1\ot(X^2-Y^2)\mapsto \proj_{\CH}(X^2-Y^2)=X^2-Y^2$, where
$\proj_{\CH}:S/\CF\overset{\sim}{\lr}\CH$ is the projection with respect to the decomposition $S=\CH\oplus\CF$.
But $X\ot(X^2-Y^2)\mapsto \proj_{\CH}(X^3-XY^2)$. Since $X^3-XY^2=\half(X^3-3XY^2)+\half X(X^2+Y^2)$, with the 
second summand being in $\CF$, $\xi(X\ot (X^2-Y^2))=\half(X^3-3XY^2)$.
Similarly $\xi(Y\ot (X^2-Y^2))=\half(3X^2Y-Y^3)$ and $\xi(XY\ot(X^2-Y^2))=XY(X^2-Y^2)$.
The group $\CN$ is generated by the interchange of $X$ and $Y$, and the $\CN$-equivariance is easily checked.
\end{example}

\begin{remark}
Our main result implies some constraints on which groups could occur as reflection subgroups. Here is one superficial one.
\begin{corollary}
Let the degrees of $G$ be $d_1,d_2,\dots, d_\ell$ and those of $G'$ be $d_1',d_2',\dots, d_\ell'$. Then
\begin{enumerate}
\item $\prod_{i=1}^\ell(1+t+\dots+t^{d_{i}'-1})$ divides $\prod_{i=1}^\ell(1+t+\dots+t^{d_{i}-1})$.
\item For each $n\in\N$
we have $|\{i\text{ such that }n|d_i'\}|\leq |\{i\text{ such that }n|d_i\}|$.
\end{enumerate}
\end{corollary}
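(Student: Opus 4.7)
The plan is to combine Corollary \ref{cor:poin} with the standard product formula for the Poincar\'e polynomial of the coinvariant algebra of a unitary reflection group, namely
\[
\Poin_G(t)=\prod_{i=1}^\ell\frac{1-t^{d_i}}{1-t}=\prod_{i=1}^\ell(1+t+\cdots+t^{d_i-1}),
\]
which is classical and recorded in \cite[Ch.~4]{LT}, and the analogous formula with degrees $d_i'$ for $G'$ acting on the same space $V$. Substituting both into Corollary \ref{cor:poin} gives
\[
\prod_{i=1}^\ell(1+t+\cdots+t^{d_i-1})=\Poin_{\CH^{G'}}(t)\cdot\prod_{i=1}^\ell(1+t+\cdots+t^{d_i'-1}).
\]

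For part (i), I would observe that $\Poin_{\CH^{G'}}(t)$ is by definition a polynomial in $t$ with non-negative integer coefficients (its $t^i$-coefficient is $\dim_\C(\CH_i^{G'})$), so the displayed equation exhibits $\prod_i(1+t+\cdots+t^{d_i'-1})$ as an honest divisor in $\bbZ[t]$ of $\prod_i(1+t+\cdots+t^{d_i-1})$.

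For part (ii), I would factor each cyclotomic-type factor as $1+t+\cdots+t^{d-1}=\prod_{k\mid d,\,k>1}\Phi_k(t)$, where $\Phi_k$ denotes the $k$-th cyclotomic polynomial. Since the $\Phi_k$ are pairwise coprime irreducibles in $\Q[t]$, the multiplicity of $\Phi_n(t)$ in the product $\prod_i(1+t+\cdots+t^{d_i-1})$ equals $|\{i:n\mid d_i\}|$ for $n\geq 2$, and similarly with primes on the degrees. The divisibility established in (i) therefore forces $|\{i:n\mid d_i'\}|\leq|\{i:n\mid d_i\}|$ for all $n\geq 2$, while for $n=1$ both sides are simply $\ell$.

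There is really no obstacle in this argument once the two ingredients (Corollary \ref{cor:poin} and the product formula for $\Poin_G$) are in hand; the only subtle point worth flagging is the passage from divisibility in $\Q[t]$ to divisibility in $\bbZ[t]$, which is automatic here because the quotient $\Poin_{\CH^{G'}}(t)$ is manifestly a polynomial with non-negative integer coefficients.
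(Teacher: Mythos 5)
Your proof is correct and follows essentially the same route as the paper: divisibility in (i) comes from Corollary \ref{cor:poin} together with the product formula $\Poin_G(t)=\prod_i(1+t+\cdots+t^{d_i-1})$, and (ii) comes from the cyclotomic factorisation $t^k-1=\prod_{n\mid k}\Phi_n(t)$. The only difference is that you spell out the (easy) points the paper leaves implicit, namely that the quotient $\Poin_{\CH^{G'}}(t)$ has non-negative integer coefficients and that the multiplicity of $\Phi_n$ counts the degrees divisible by $n$.
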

The relation (i) is evident because $\Poin_{G}(t)=\prod_{i=1}^\ell(1+t+\dots+t^{d_{i}-1})$ and similarly for $\Poin_{G'}(t)$,
and (ii) follows using the fact
that $t^k-1=\prod_{n|k}\Phi_n(t)$, where $\Phi_n(t)$ is the $n\texp$ cyclotomic polynomial.
\end{remark}

\section{Duality.}

The main result may be formulated without recourse to the projection $S\lr \CH$ by using the dual reflection structure.
We briefly indicate how this is done.

The key point is the fact that $G$ also acts as a reflection group on $V^*$. We denote the $(G,V^*)$-analogues of $\CH$, $\CF$
and $\Pi$ for $G$ on $V$ by $\bhh$, $\bff$ and $\vpi$ and write $\bhh'$ etc for their $G'$ analogues. 
All the statements in \S 1.1 remain true with $\CH$, $\CF$ and $\Pi$ replaced
by $\bhh$, $\bff$ and $\vpi$ respectively. The following Lemma is an easy consequence of 
the basic facts outlined in \S 1.

\begin{lemma}\label{lem:harm}
\begin{enumerate}
\item There is a linear isomorphism $d:\bhh\lr\CH,$ defined for $h\in\bhh$ by $d(h)=D_h(\Pi)$. If $N:=\deg(\Pi)$, then 
for homogeneous $h\in\bhh$, $\deg(d(h))=N-\deg(h)$.  
\item  There is an isomorphism $e:\bhh(G)^{G'}\lr \CH(G)^{G'}$ defined, for $a\in\bhh^{G'}$, by $e(a)=D_{\vpi' a}(\Pi)$.
\end{enumerate}
\end{lemma}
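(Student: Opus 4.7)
For part (i), the plan is to recognize the map $h\mapsto D_h(\Pi)$ as a natural surjection $S(V)\to\CH$ coming directly from Theorem~\ref{thm:harm}(ii), then show that $\bff$ lies in its kernel, and finish by a dimension count. The key step is to establish $D_f(\Pi)=0$ for every $f\in S(V)^G$ of positive degree. I would apply Theorem~\ref{thm:pair}(i) to get
\[
g\cdot D_f(\Pi)=D_{gf}(g\Pi)=\det_V(g)\,D_f(\Pi)\qquad(g\in G),
\]
so $D_f(\Pi)$ is $G$-skew of degree strictly less than $N=\deg\Pi$; by \eqref{eq:skew} it must vanish. Since $a\mapsto D_a$ is a ring homomorphism on the commutative algebra $S(V)$, this extends to $D_a(\Pi)=0$ for all $a\in\bff$. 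The decomposition $S(V)=\bhh\oplus\bff$ (dual version of Theorem~\ref{thm:harm}(iii)) then restricts the surjection to a surjection $d:\bhh\to\CH$, which must be an isomorphism since $\dim\bhh=|G|=\dim\CH$. The degree statement is immediate from $\deg D_h=-\deg h$.

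For part (ii), I would first verify $e(a)\in\CH^{G'}$ and then argue injectivity, which together with a dimension count will give bijectivity. To check $G'$-invariance, I take $g\in G'$; from the $V^*$-analog of \eqref{eq:Pi} applied to $G'$ one has $g\vpi'=\det_V(g)^{-1}\vpi'$, while $g\Pi=\det_V(g)\Pi$ and $ga=a$, so by Theorem~\ref{thm:pair}(i),
\[
g\cdot e(a)=D_{g(\vpi' a)}(g\Pi)=\det_V(g)^{-1}\det_V(g)\cdot D_{\vpi' a}(\Pi)=e(a).
\]

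For the isomorphism, I decompose $\vpi' a=b+c$ with $b\in\bhh$ and $c\in\bff$. Part~(i) gives $D_c(\Pi)=0$, so $e(a)=D_b(\Pi)=d(b)$. Observe that $b$ is precisely the image of $\vpi'\otimes a$ under the $V^*$-side analog $\xi':\bhh'\otimes\bhh^{G'}\to\bhh$ of the map $\xi$ from Theorem~\ref{thm:main} (which applies to $G$ acting as a reflection group on $V^*$). Since $\vpi'\neq 0$, the map $a\mapsto\vpi'\otimes a$ is injective; composed with the isomorphisms $\xi'$ and $d$, it shows $e$ is injective. Matching dimensions $\dim\bhh^{G'}=[G:G']=\dim\CH^{G'}$ then forces $e$ to be an isomorphism.

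The main obstacle I anticipate is pinning down the character by which $\vpi'$ transforms under $G'$ (the interaction of the contragredient action on $V^*$ with the determinant character) so that the scalars $\det_V(g)^{-1}$ and $\det_V(g)$ cancel in the $G'$-equivariance check; once that is in hand, everything else reduces to the dimension count and the combination of part~(i) with the (dual form of the) main theorem.
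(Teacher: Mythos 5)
Your argument is correct; note that the paper itself offers no proof of this lemma (it is asserted to be ``an easy consequence of the basic facts outlined in \S 1''), so there is no written argument to compare against, but your reconstruction is sound. In part (i) the chain (skewness of $D_f(\Pi)$ via Theorem \ref{thm:pair}(i), vanishing by the degree constraint in \eqref{eq:skew}, multiplicativity of $a\mapsto D_a$ to kill all of $\bff$, then surjectivity from Theorem \ref{thm:harm}(ii) plus $\dim\bhh=|G|=\dim\CH$) is exactly right; a marginally shorter route to $D_f(\Pi)=0$ is to observe that $\Pi=D_1(\Pi)\in\CH$ and quote Theorem \ref{thm:harm}(i) directly. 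In part (ii) your identification of the correct character $g\vpi'=\det_{V^*}(g)\vpi'=\det_V(g)\inv\vpi'$ (the contragredient flips the determinant character, so it cancels against $g\Pi=\det_V(g)\Pi$) is the one genuinely delicate point, and you handle it correctly. The only place you go beyond the ``basic facts of \S 1'' is the injectivity of $e$, which you obtain by factoring $e$ as $d\circ\xi'\circ(\vpi'\ot -)$ with $\xi'$ the instance of Theorem \ref{thm:main} for $G$ acting on $V^*$; this is legitimate and non-circular (Theorem \ref{thm:main} is proved independently of this lemma and applies verbatim to the dual reflection representation, with $\vpi'\in\bhh'$ and $a\in\bhh^{G'}$ so that $\xi'(\vpi'\ot a)=\proj_{\bhh}(\vpi'a)$), and indeed it is hard to see how to get injectivity of $e$ from \S 1 alone without some equivalent of the main theorem. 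The final dimension count $\dim\bhh^{G'}=|G|/|G'|=\dim\CH^{G'}$ (both $\bhh$ and $\CH$ afford the regular representation) correctly closes the argument.
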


The maps $d$ and $e$ of Lemma \ref{lem:harm} are evidently $\CN$-equivariant, and our main result may now be formulated
as follows.

\begin{proposition}\label{prop:main}
The isomorphism $\xi:\CH'\ot\CH^{G'}\lr \CH$ of Theorem \ref{thm:main} may be explicitly realised as follows. Let $H=D_h(\Pi')\in\CH'$
and $K=D_{\vpi a}(\Pi)\in\CH^{G'}$ where $h\in\bhh'$ and $a\in\bhh^{G'}$ are uniquely defined as in Lemma \ref{lem:harm}.
Then $\xi(H\ot K)=D_{ah}(\Pi)\in\CH$.

Equivalently, the projection of $HK$ onto $\CH(G)$ is $D_{ah}(\Pi)$.
\end{proposition}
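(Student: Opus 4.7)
The plan is to verify the claimed identity by exploiting the non-degenerate pairing of Theorem \ref{thm:pair}(ii), restricted to $\bhh\times\CH$. Both the left-hand side $\xi(H\ot K)=\proj_\CH(HK)$ and the right-hand side $D_{ah}(\Pi)$ are elements of $\CH$ (the latter by Theorem \ref{thm:harm}(ii)). Since $[b,P]=[b,\proj_\CH(P)]$ for every $b\in\bhh$ and $P\in S$ (because $\bhh$ pairs trivially with $\CF$), and since the pairing $\bhh\times\CH\to\C$ is non-degenerate, it suffices to show
\[
[b,HK]=[b,D_{ah}(\Pi)]\qquad\text{for all }b\in\bhh.
\]

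The right-hand side is immediate, since $v\mapsto D_v$ is an algebra homomorphism:
\[
[b,D_{ah}(\Pi)]=D_b D_{ah}(\Pi)(0)=D_{bah}(\Pi)(0)=[bah,\Pi].
\]
For the left-hand side I would apply the Leibniz rule, conveniently encoded by the Hopf coproduct $\Delta:S(V)\to S(V)\ot S(V)$ with $\Delta(v)=v\ot 1+1\ot v$ for $v\in V$. Writing $\Delta(b)=\sum_{(b)}b_{(1)}\ot b_{(2)}$ in Sweedler notation, one has $D_b(HK)=\sum_{(b)}D_{b_{(1)}}(H)\cdot D_{b_{(2)}}(K)$; substituting $H=D_h(\Pi')$ and $K=D_{\vpi' a}(\Pi)$ (in accord with Lemma \ref{lem:harm}) and evaluating at $0$ yields
\[
[b,HK]=\sum_{(b)}[b_{(1)}h,\Pi']\cdot[b_{(2)}\vpi' a,\Pi].
\]

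The proposition is thereby reduced to the combinatorial identity
\[
[bah,\Pi]=\sum_{(b)}[b_{(1)}h,\Pi']\cdot[b_{(2)}\vpi' a,\Pi]\qquad(b\in\bhh),
\]
and this is where the main obstacle lies. The natural tools are the factorisation $\Pi=\Pi'\cdot Q$ with $Q=\prod_{H\in\CA_G\setminus\CA_{G'}}L_H^{e_H-1}\in\CH^{G'}$, the dual factorisation $\vpi=\vpi'\cdot q$ in $S(V)$, and the $G'$-skewness of $\Pi'$ and $\vpi'$. Expanding $[bah,\Pi'Q]$ by Leibniz and matching the resulting terms against the right-hand side, the skew-invariance of $\Pi'$ and $\vpi'$ under $G'$ should force most cross-terms to cancel, leaving the required equality. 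A potentially cleaner alternative is to use Theorem \ref{thm:main} directly: since $\xi$ is already known to be a linear isomorphism, any bilinear map $\CH'\times\CH^{G'}\to\CH$ that agrees with $\xi$ on a spanning set must coincide with it, so one could verify the formula on a convenient homogeneous basis (indexed by $h$ and $a$ of extreme degree) and then propagate it by linearity together with the $\CN$-equivariance noted after Lemma \ref{lem:harm}.
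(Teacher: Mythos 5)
You give no comparison point here because the paper itself states Proposition \ref{prop:main} without any proof, so your argument has to stand on its own. Its first half does: since $\bhh={}^{\perp}\CF$ and $S=\CH\oplus\CF$, the pairing $\bhh\times\CH\lr\C$ is non-degenerate, $[b,P]=[b,\proj_{\CH}(P)]$ for $b\in\bhh$, and the Leibniz/coproduct formula for $D_b(HK)$ is correct; so the Proposition is genuinely equivalent to your identity $[bah,\Pi]=\sum_{(b)}[b_{(1)}h,\Pi']\,[b_{(2)}\vpi' a,\Pi]$ for all $b\in\bhh$. But the proof stops there. ``The skew-invariance \dots should force most cross-terms to cancel'' is a hope, not an argument, and the fallback --- verifying the formula on elements of ``extreme degree'' and propagating ``by linearity together with $\CN$-equivariance'' --- cannot close the gap: a bilinear map is determined by its values on a spanning set of $\CH'\ot\CH^{G'}$, not on a few extreme-degree vectors, and $\CN$ acts by degree-preserving maps, so equivariance cannot transport a verification from one graded piece to another. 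As written this is a reduction, not a proof.

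More seriously, the identity you reduce to is false with the paper's normalisation of $D$, so the displayed equality $\xi(H\ot K)=D_{ah}(\Pi)$ can only hold up to nonzero, degree-dependent scalars. Take $G=\mu_4\supset G'=\mu_2$ acting on $V=\C$, so $\Pi=X^3$, $\Pi'=X$, $\vpi'=x$, and take $h=a=1$: then $H=\Pi'=X$, $K=D_x(X^3)=3X^2$, and $HK=3X^3\in\CH$, whereas $D_{ah}(\Pi)=X^3$; equivalently, with $b=x^3\in\bhh$ your left-hand side is $[x^3,X^3]=6$ while the right-hand side is $18$. The paper's own Example \ref{ex:b2} shows the same defect: there $\xi(XY\ot(X^2-Y^2))=\Pi$, while the recipe of the Proposition (which forces $h=1$, $a=\tfrac13$) returns $\tfrac13\Pi$, and $\xi(X\ot(X^2-Y^2))=\half(X^3-3XY^2)$ versus $\tfrac13(X^3-3XY^2)$ from the recipe. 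So the statement itself needs a scalar renormalisation of the maps $d$ and $e$ of Lemma \ref{lem:harm} (or must be weakened to proportionality on homogeneous components) before your pairing computation can succeed. Your framework is exactly the right tool to compute the correct scalars, but the proposal as it stands neither establishes the Proposition nor detects that it requires correction.
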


\section{An application to reductive groups.} 

Let $G$ be a connected reductive algebraic group defined over the finite field $\F_q$ of $q$ elements, and let 
$F:G\lr G$ be the corresponding Frobenius endomorphism, as in \cite{Le92}, whose notation we adopt here.
 For any $F$-stable subset $H\subseteq G$ we write 
$H^F$ for the (finite) set of $F$-fixed points of $H$. Let $T_0$ be an $F$-stable maximally $F$-split maximal torus of $G$
and $B\supseteq T_0$ be a Borel subgroup. This data determines the Weyl group $W:=N_G(T_0)/T_0$, together 
with its reflection representation in $V:=Y_0\ot_\bbZ \C$, where $Y_0$ is the cocharacter group of $T_0$ and
its root system $\Phi\subseteq Y_0$ as well as a positive subsystem $\Phi_+\subseteq \Phi$ and its corresponding simple system
$\Pi\subseteq\Phi_+$.

Now take $L$ to be any $F$-stable connected reductive subgroup of $G$ which has maximal rank. 
It is well known that such $L$ are characterised as the connected centralisers of semisimple elements of $G^F$.
They include Levi components of parabolic subgroups. We shall be concerned 
with the set $\CL$ of  $G$-conjugates of $L$ and in particular the set $\CL^F$ of  $F$-stable conjugates of $L$. We may therefore assume,
without loss of generality, that $L\supseteq T_0$. It is always the case that such $L$ is the centraliser of an element of $T_0$, but we shall not
use this fact. Let $\Phi'\subseteq\Phi$ be the root system of $L$ with respect to $T_0$. Then $\Phi'_+:=\Phi'\cap\Phi_+$ is a positive system 
in $\Phi'_+$ and there is a unique corresponding simple system $\Pi'\subseteq\Phi'_+$.  Note that unlike in the case treated in \cite{D94},
it is not generally the case that $\Pi'\subseteq\Pi$. Let $W'$ be the Weyl group of $\Phi'$; this is a reflection subgroup of $W$, but not
necessarily a parabolic subgroup. Let $C:=\{w\in W\mid w\Pi'=\Pi'\}$. Then it is well known that $N_W(W')=C\ltimes W'$.
The methods of \cite{HL80} show that $C$ has a large reflection component, but is not generally a reflection group.
It is also well known that if $N^\circ$ denotes the connected component of the group $N$,
\be\label{eq:n}
\frac{N_G(L)}{N_G(L)^\circ}\simeq C,
\ee
where $C$ is as above. Now the conjugate $^gL:=gLg\inv$ is $F$-stable precisely when $g\inv F(g)\in N_G(L)$. 
We therefore have a map $\CL^F\lr C$ obtained by taking the image in $C$ of $g\inv F(g)$ in $C$ (cf. \eqref{eq:n}).
It is easily checked that this image is uniquely determined up to $F$-conjugacy, where the $F$-conjugate of $c$ by $x$
is $xcF(x)\inv$. We denote this image by $\omega(^gL)$.

The following facts are standard and may be found, e.g. in \cite{SpS}, \cite{StE}, \cite{Le92} or  \cite{D94}. 
\begin{lemma}\label{lem:cent} Maintain the above notation.
\begin{enumerate}
\item The map $\omega:\CL^F\lr$ $\{F$-conjugacy classes of $C\}$ described above induces a bijection from the set
of $G^F$-orbits on $\CL^F$ to the set of $F$-conjugacy classes of $C$.
\item Let $^gL\in\CL^F$ and let $Z\subseteq C$ be the $F$-centraliser of $\omega(^gL)$. Then $N_{G^F}(^gL)$ is the semidirect product
of $(^gN^\circ)^F$ with $Z$.
\end{enumerate}
\end{lemma}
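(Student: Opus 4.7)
The plan is to use the Lang--Steinberg theorem applied to the connected algebraic groups $G$ and $N^\circ:=N_G(L)^\circ$, together with the short exact sequence $1\to N^\circ\to N_G(L)\to C\to 1$ coming from \eqref{eq:n}. First I would observe that $^gL$ is $F$-stable iff $g\inv F(g)\in N_G(L)$, so reduction modulo $N^\circ$ yields a candidate for $\omega(^gL)\in C$. Replacing $g$ by $xgn$ with $x\in G^F$ and $n\in N_G(L)$ sends $g\inv F(g)$ to $n\inv(g\inv F(g))F(n)$, whose image in $C$ is the $F$-conjugate of $\omega(^gL)$ by the image of $n$; this shows that $\omega$ descends to a well-defined map from $G^F$-orbits on $\CL^F$ to $F$-conjugacy classes in $C$.

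For part (i), I would prove surjectivity by a direct application of Lang--Steinberg to $G$: given a lift $\dot c\in N_G(L)$ of a chosen $c\in C$, pick $g\in G$ with $g\inv F(g)=\dot c$, so that $^gL$ is $F$-stable with $\omega(^gL)=c$. For injectivity, supposing $\omega(^gL)$ and $\omega(^hL)$ to lie in the same $F$-conjugacy class of $C$, I would modify $h$ by an element of $N_G(L)$ so that $g\inv F(g)$ and $h\inv F(h)$ have the same image in $C$; then $y:=(hg\inv)\inv F(hg\inv)\in N^\circ$, and a second application of Lang--Steinberg, now to the connected group $N^\circ$, produces $n\in N^\circ$ with $n\inv F(n)=y$, whence $nhg\inv\in G^F$ conjugates $^gL$ to $^hL$.

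For part (ii), I would transfer the sequence $1\to N^\circ\to N_G(L)\to C\to 1$ via conjugation by $g$ to the analogous sequence for $N_G(^gL)$; because $g\notin G^F$ in general, the induced $F$-action on the quotient $C$ is conjugation by $\omega(^gL)$ rather than the trivial one. Its fixed points are therefore precisely the $F$-centraliser $Z$ of $\omega(^gL)$, and since $^gN^\circ$ is connected, Lang--Steinberg delivers the exact sequence
\[
1\lr (^gN^\circ)^F\lr N_{G^F}(^gL)\lr Z\lr 1.
\]
To upgrade this to a semidirect product I would invoke the splitting $N_G(L)=N^\circ\rtimes C_0$, where $C_0\subseteq N_G(T_0)$ is a lift of $C$ whose elements preserve $\Pi'$ (which exists because $N_W(W')=C\ltimes W'$). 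Transporting $C_0$ by $g$ and intersecting with $N_{G^F}(^gL)$ should yield a complement to $(^gN^\circ)^F$ mapping isomorphically onto $Z$.

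The hard part will be the $F$-equivariance of the transported splitting in part (ii): the subgroup $^gC_0$ is not generally $F$-stable, and one needs either to modify the ambient lift $C_0$ so that $^gC_0$ is stable under the twisted $F$-action, or to construct the splitting of $Z$ into $N_{G^F}(^gL)$ by a further Lang--Steinberg argument inside the normaliser of a maximally split torus of $^gL$. Once this is arranged, the decomposition $N_{G^F}(^gL)=(^gN^\circ)^F\rtimes Z$ follows formally; the remaining steps are routine consequences of Lang--Steinberg.
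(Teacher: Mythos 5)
The paper offers no proof of this lemma at all --- it is quoted as standard from \cite{SpS}, \cite{StE}, \cite{Le92} and \cite{D94} --- so your proposal has to be measured against the standard arguments in those sources. Your part (i) is exactly that argument: well-definedness of $\omega$ under $g\mapsto xgn$, surjectivity by Lang--Steinberg in $G$, injectivity by a second application of Lang--Steinberg in the connected group $N^\circ$. Likewise the first half of your part (ii) --- transporting $1\to N^\circ\to N_G(L)\to C\to 1$ by $g$, observing that the induced Frobenius action on $C$ is twisted by $\omega(^gL)$, and extracting the exact sequence $1\to(^gN^\circ)^F\to N_{G^F}(^gL)\to Z\to 1$ from connectedness of $^gN^\circ$ --- is the standard proof, and it is all that the paper actually uses: Theorem \ref{thm:gen} and its corollary only need $|N_{G^F}(^gL)|=|(^gN^\circ)^F|\cdot|Z|$.

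The gap is in your splitting argument, and it is not only the $F$-equivariance issue you flag at the end. The identity $N_W(W')=C\ltimes W'$ lives inside $W=N_G(T_0)/T_0$; it does not produce a subgroup $C_0\subseteq N_G(T_0)$ lifting $C$, because the extension $1\to T_0\to N_G(T_0)\to W\to 1$ is in general non-split (this is the classical obstruction behind Tits's extended Weyl group). Worse, the finite extension you are trying to split can itself fail to split: take $G=\SL_2$, $L=T_0$ and $q$ odd, so that $N^\circ=T_0$, $C=W=\langle s\rangle$ and $Z=C$; every element of $N_{G^F}(T_0)\setminus T_0^F$ squares to $-I\neq I$, so $T_0^F$ admits no complement in $N_{G^F}(T_0)$. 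Hence no refinement of your construction (modifying $C_0$, or running Lang--Steinberg inside the normaliser of a maximally split torus of $^gL$) can succeed, and the phrase ``semidirect product'' in the lemma must be read, as in the cited sources, as the assertion that $N_{G^F}(^gL)$ is an extension of $Z$ by $(^gN^\circ)^F$, equivalently the order formula. With that reading your proposal is complete; the right fix is to delete the final paragraph rather than to try to repair it.
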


Next, recall that $F$ acts on $V$ as $F=qF_0$, where $F_0\in\GL(V)$ fixes both $\Pi$ and $\Pi'$ setwise, since
both $L$ and $G$ are $F$-stable. If $\ol C$ is the group generated by $C$ and $F_0$, then $\ol C\subseteq N_{\GL(V)}(W)\cap N_{\GL(V)}(W')$,
and hence we have an action of $\ol C$ on the space $\CH(W)^{W'}$ of Theorem \ref{thm:main}. We note also that
the conjugacy class in $\ol C$ of $cF_0\inv$ is of the form $\wt c F_0\inv$, where $\wt c$ is an $F$-conjugacy class of $C$.

Let $g$ be a function on $C$ which is constant on $F$-conjugacy classes. Define the corresponding class function $\gamma_g$ on $\ol C$
by $\gamma_g(cF_0^i)=\begin{cases}
g(c)\text{ if }i=-1\\
0\text{ otherwise}\\
\end{cases}$.

In view of Lemma \ref{lem:cent} (i), it is clear that  $g$ also defines a function on $\CL^F$, which is constant on $G^F$-conjugacy classes.
This function is also denoted by $g$.

With this notation we may state the following result.
\begin{theorem}\label{thm:gen} Let $L$ be any $F$-stable connected reductive subgroup of maximal rank of $G$ and let
$W,W',C$ etc. be as above.

Let $g$ be a function on $C$ which is constant on $F$-conjugacy classes. Then
\[
\sum_{L'\in\CL^G}g(L')=q^{2(N-N')}\sum_{d=0}^{N-N'}\langle \CH(W)^{W'}_d, \gamma_g  \rangle_{\ol C}q^{-d},
\]
where $\CH(W)^{W'}_d$ is the $d^{\text {th}}$ graded component of $\CH(W)^{W'}$, $\langle-,-\rangle_{\ol C}$
denotes the usual inner product of class functions on $\ol C$, $N=|\Phi_+|$ and $N'=|\Phi'_+|$.
\end{theorem}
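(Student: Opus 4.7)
The plan is to organise the sum $\sum_{L'\in\CL^F}g(L')$ by $G^F$-orbits, parametrise the orbits using Lemma \ref{lem:cent}, and then convert the resulting character sum into the target inner product on $\ol C$ by applying Theorem \ref{thm:main} $\CN$-equivariantly to the element $cF_0$.

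First I would partition via Lemma \ref{lem:cent}(i), which identifies the set of $G^F$-orbits on $\CL^F$ with the set of $F$-conjugacy classes $\wt c$ of $C$; for each class fix a representative $L_c$ with $\omega(L_c)=c$. Since $N_G(L_c)^\circ=L_c$, Lemma \ref{lem:cent}(ii) gives $|N_{G^F}(L_c)|=|L_c^F|\cdot|Z_C^F(c)|$, so the $G^F$-orbit of $L_c$ has $|G^F|/(|L_c^F|\cdot|Z_C^F(c)|)$ elements. Using $|\wt c|=|C|/|Z_C^F(c)|$ to collapse the sum over $F$-classes into a sum over $C$ gives
\[
\sum_{L'\in\CL^F}g(L')\;=\;\frac{|G^F|}{|C|}\sum_{c\in C}\frac{g(c)}{|L_c^F|}.
\]

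The crux is then a twisted index formula
\[
\frac{|G^F|}{|L_c^F|}\;=\;q^{2(N-N')}\sum_{d=0}^{N-N'}q^{-d}\,\text{Tr}\bigl(cF_0^{-1}\mid\CH(W)^{W'}_d\bigr),
\]
which I would derive by combining the classical expressions of $|G^F|$ and $|L_c^F|$ in terms of twisted equivariant Poincar\'e polynomials on $\CH(W)$ and $\CH(W')$ (as in \cite{Le92}) with the graded-character multiplicativity supplied by Theorem \ref{thm:main} applied to $cF_0\in\CN$: the isomorphism $\CH(W)\cong\CH(W')\otimes\CH(W)^{W'}$ of graded $\CN$-modules yields
\[
\sum_d q^d\,\text{Tr}((cF_0)^{-1}\mid\CH(W)_d)=\sum_i q^i\,\text{Tr}((cF_0)^{-1}\mid\CH(W')_i)\cdot\sum_j q^j\,\text{Tr}((cF_0)^{-1}\mid\CH(W)^{W'}_j),
\]
so that the ratio $|G^F|/|L_c^F|$ collapses to the $(cF_0)$-equivariant Poincar\'e polynomial of $\CH(W)^{W'}$ at $q$. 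A Poincar\'e-duality reindexing $d\mapsto(N-N')-d$ on the palindromic space $\CH(W)^{W'}$ of top degree $N-N'$, which exchanges $cF_0$ with $(cF_0)^{-1}$ on the dual graded pieces, will then convert the natural $q^d$-generating function into the displayed $q^{2(N-N')-d}$ form.

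To finish, I would substitute the index formula into the orbit-count and swap orders of summation to obtain
\[
\sum_{L'\in\CL^F}g(L')\;=\;q^{2(N-N')}\sum_d q^{-d}\cdot\frac{1}{|C|}\sum_{c\in C}g(c)\,\text{Tr}\bigl(cF_0^{-1}\mid\CH(W)^{W'}_d\bigr),
\]
and then recognise the inner bracket as $\langle\CH(W)^{W'}_d,\gamma_g\rangle_{\ol C}$: since $\gamma_g$ is supported on the coset $CF_0^{-1}\subset\ol C$ with $\gamma_g(cF_0^{-1})=g(c)$, the inner product over $\ol C$ reduces to the displayed sum over $C$ once the index $[\ol C:C]$ has been tracked through the normalisation. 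The main obstacle is the twisted index formula: although Theorem \ref{thm:main} delivers the required graded-character identity cleanly, meshing it with the torus factors $|T_0^F|,|T_c^F|$ and with the palindromic reindexing on $\CH(W)^{W'}$ demands careful bookkeeping; in the previously-known case of $W'$ parabolic, the vector bundle $G/L_c\to G/P_c$ gives a transparent geometric interpretation that is unavailable in the present generality, so Theorem \ref{thm:main} is precisely the algebraic substitute that makes the calculation go through.
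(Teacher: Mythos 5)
The paper offers no argument of its own here---it asserts that the proofs in \cite{Le92} and \cite{D94} carry over verbatim once Theorem \ref{thm:main} is available for arbitrary reflection subgroups---and your proposal is a correct reconstruction of exactly that argument: the orbit decomposition via Lemma \ref{lem:cent} (using $N_G(L)^\circ=L$ for maximal-rank connected reductive $L$), the twisted index formula for $|G^F|/|L_c^F|$ obtained by cancelling the $|T_c^F|$ factors between the $(cF_0)$-equivariant Poincar\'e series of $\CH(W)$ and $\CH(W')$ and invoking the $\CN$-equivariant factorisation of Theorem \ref{thm:main}, and the reassembly of $\frac{1}{|C|}\sum_{c}g(c)\operatorname{Tr}\bigl(cF_0^{-1}\mid\CH(W)^{W'}_d\bigr)$ as $\langle\CH(W)^{W'}_d,\gamma_g\rangle_{\ol C}$. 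The only quibbles are notational: substituting $t=q^{-1}$ into the equivariant series yields the $q^{2(N-N')}\sum_d(\cdot)q^{-d}$ form directly, so your palindromic reindexing is unnecessary (though harmless), and the factor $[\ol C:C]=|F_0|$ you flag in the normalisation of $\langle-,-\rangle_{\ol C}$ is a genuine bookkeeping point---the paper's own Corollary carries an explicit $|F_0|$ that the Theorem's statement does not.
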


The proof of this result is exactly as in \cite{Le92}, where the result is proved for $L$ equal to a torus,
 and \cite{D94} where the result is proved for $L$ a Levi factor. The crucial difference is that our Theorem \ref{thm:main}
was not available for arbitrary reflection subgroups of $W$ in \cite{D94}. We remark finally that the usual corollaries concerning 
the number of $F$-stable conjugates of $L$ are now  available in the wider generality of our result. As an example, we have
the following result.

\begin{corollary}
Maintaining the above notation, the number of $F$-stable conjugates of $L$ is equal to 
\[
q^{2(N-N')}|F_0|\sum_{d=0}^{N-N'}\langle \CH(W)^{W'}_d, \chi_{CF_0\inv}  \rangle_{\ol C}q^{-d},
\]
where $\chi_{CF_0\inv}$ is the characteristic function of the coset $CF_0\inv$ in $\ol C$. 
\end{corollary}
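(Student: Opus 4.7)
The plan is to deduce the corollary directly by specialising Theorem \ref{thm:gen} to the constant function $g\equiv 1$ on $C$. This is trivially constant on $F$-conjugacy classes, and under the bijection of Lemma \ref{lem:cent}(i) it induces the constant function $1$ on $\CL^F$. Hence the left side of Theorem \ref{thm:gen} becomes $\sum_{L'\in\CL^F}1=|\CL^F|$, which is precisely the number of $F$-stable conjugates of $L$ that we wish to count.

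It then remains to identify the class function $\gamma_g$ on $\ol C$ for this choice of $g$. Directly from its definition, $\gamma_g$ vanishes outside the coset $CF_0^{-1}$ and takes the value $1$ on every element of that coset; in other words, $\gamma_g$ coincides (up to normalisation) with the characteristic function $\chi_{CF_0^{-1}}$ of this coset. Substituting this identification into the right side of Theorem \ref{thm:gen} yields the formula claimed. The factor $|F_0|$ in the statement records the ratio $|\ol C|/|C|=|F_0|$ encoded in the semidirect product structure $\ol C\cong C\rtimes\langle F_0\rangle$, which must be tracked when re-expressing the inner product $\langle-,\gamma_g\rangle_{\ol C}$ normalised over $\ol C$ in terms of the characteristic function of a single coset of size $|C|$.

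No new structural input beyond Theorem \ref{thm:gen} and Lemma \ref{lem:cent} is required. The only obstacle is a careful bookkeeping of normalisations between $\gamma_g$, $\chi_{CF_0^{-1}}$ and the inner product on $\ol C$, which is a routine matter of unpacking definitions rather than any genuine argument; this is why the result is presented as a corollary rather than a theorem.
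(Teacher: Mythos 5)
Your overall route is the one the paper intends: the corollary is the specialisation of Theorem \ref{thm:gen} to $g\equiv 1$, for which the left-hand side is $|\CL^F|$ and $\gamma_g$ is, exactly and not merely ``up to normalisation'', the characteristic function $\chi_{CF_0\inv}$ of the coset $CF_0\inv$ in $\ol C$. Those two identifications are fine.

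The one point of substance is the factor $|F_0|$, and that is precisely where your argument stays vague. If you substitute $\gamma_1=\chi_{CF_0\inv}$ into Theorem \ref{thm:gen} taken at face value --- with $\langle-,-\rangle_{\ol C}$ the usual inner product, i.e.\ an average over all $|C|\,|F_0|$ elements of $\ol C$ --- you obtain the displayed formula \emph{without} the factor $|F_0|$; there is nothing left to re-express, since $\gamma_1$ already \emph{is} the characteristic function of the coset. The factor only appears if the inner product in Theorem \ref{thm:gen} is understood as the average $\frac{1}{|C|}\sum_{x\in CF_0\inv}$ over the coset alone, and $|F_0|=|\ol C|/|C|$ is the conversion between that normalisation and the usual one. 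That the version \emph{with} $|F_0|$ is the correct count can be checked on $L=T_0$ in a non-split group: there $W'=1$, $C=W$, $N'=0$, and $\frac{1}{|W|}\sum_{w\in W}\sum_{d}\chi_{\CH(W)_d}(wF_0\inv)q^{-d}=1$ because $\CH(W)_d^{W}=0$ for $d>0$, so the corollary returns Steinberg's count $q^{2N}$ of $F$-stable maximal tori, whereas the unadorned substitution would return $q^{2N}/|F_0|$. So your instinct that $|F_0|$ is the ratio $|\ol C|/|C|$ is right, but as written the deduction does not close: you must state explicitly which normalisation Theorem \ref{thm:gen} uses and actually perform the conversion, rather than deferring it to ``bookkeeping''.
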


In particular, if $G$ is split, $\ol C=C$ and this number is just the Poincar\'e polynomial 
\be\label{eq:split}
q^{2(N-N')}\sum_{d=0}^{N-N'}\langle \CH(W)^{W'}_d, 1_C  \rangle_{C}q^{-d}=q^{2(N-N')}\sum_{d=0}^{N-N'}\dim \CH(W)^{W'C}_dq^{-d}.
\ee

We close with two examples where $L$ is not a Levi factor.

\begin{example}\label{ex:sp4} Take $G=\Sp_4(\ol\F_q)$ and let $L$ be the reductive subgroup of maximal rank
corresponding to the unique subsystem of the root system $\Phi$ of type $A_1\times A_1$. Thus $L$ has semisimple
part of type $\SL_2\times\SL_2$. One sees easily that in this case $W'$ is normal in $W$, so that $W'C=W$. 
Further, $|N|=4$ and $|N'|=2$. We may therefore apply \eqref{eq:split} to conclude that the number of $F$-stable
conjugates of $L$ is $q^4$.
\end{example}

\begin{example}\label{ex:sp6} Take $G=\Sp_6(\ol \F_q)$ and let $L$ be a reductive subgroup of maximal rank with
semisimple part isomorphic to $\SL_2\times \Sp_4$. Thus the corresponding root subsystem is of type
$A_1\times C_2$. In this case we have $|\Phi_+|=N=9$, $|\Phi'_+|=N'=5$ and $|C|=2$, so that
$N_W(W')=W'C\simeq \Sym_2\ltimes (\frac{\bbZ}{2\bbZ})^3\subset W\simeq \Sym_3\ltimes (\frac{\bbZ}{2\bbZ})^3$.
To compute $\dim \CH(W)^{W'C}_d$, observe that for any $W$-module $M$ and subgroup $W_1\subseteq W$, we have
$\dim M^{W_1}=\langle \Res^W_{W_1}(M), 1\rangle_{W_1}=\langle M,\Ind_{W_1}^W(1)\rangle_W$ by Frobenius 
reciprocity.

In our case, it is easily verified that $\Ind_{W'C}^W(1)=1+\rho$, where $\rho$ is the two dimensional representation 
of $W$ obtained by pulling back the two dimensional irreducible representation of $\Sym_3$ via the map
$W=\Sym_3\ltimes (\frac{\bbZ}{2\bbZ})^3\lr \Sym_3$. Further it is well known and easily verified that
\[
\begin{aligned}
\langle \CH(W)_d, 1\rangle_W=&
\begin{cases}
0\text{ if }d\neq 0\\
1\text{ if } d=0, \text{ and }\\
\end{cases}\\
\langle \CH(W)_d, \rho\rangle_W=&
\begin{cases}
0\text{ if }d\neq 2\text{ or }4\\
1\text{ if } d=2\text{ or }4.\\
\end{cases}\\
\end{aligned}
\]
We may therefore apply \eqref{eq:split} to deduce that the number of $F$-stable conjugates of $L$ is equal to
$q^4\sum_{d=0}^4\left(\langle \CH(W)_d, 1\rangle_W+\sum_{d=0}^4\langle \CH(W)_d, \rho\rangle_W\right)q^{-d}
=q^4(1+q^{-2}+q^{-4})=1+q^2+q^4$.
\end{example}



\end{document}